\documentclass[letterpaper, 10 pt, conference]{ieeeconf}  

\IEEEoverridecommandlockouts 
                                                          
\overrideIEEEmargins 

\usepackage{graphics} 
\usepackage{epsfig} 
\usepackage{mathptmx} 
\usepackage{times} 
\usepackage{amsmath} 
\usepackage{amssymb}  
\usepackage{subcaption}
\usepackage[ruled]{algorithm2e}
\usepackage{amsfonts}
\usepackage{multirow}
\usepackage{MnSymbol}
\usepackage{float}
\usepackage{color}

\usepackage{amsbsy}

\usepackage[multiple]{footmisc} 
\setlength{\abovedisplayskip}{3pt}
\setlength{\belowdisplayskip}{3pt} 
\setlength{\skip\footins}{7pt}

\newcommand{\x}{\mathbf{x}}
\newcommand{\HH}{\mathbf{H}}

\newtheorem{definition}{Definition}
\newtheorem{theorem}{Theorem}

\newtheorem{proposition}{Proposition}
\newtheorem{assumption}{Assumption}

\setlength{\textfloatsep}{10pt plus 1.0pt minus 2.0pt}

\title{Conflict-driven Hybrid Observer-based Anomaly Detection}
\author{Zheng Wang$^{1}$, Farshad Harirchi$^{2}$, Dhananjay Anand$^{3}$, CheeYee Tang$^{3}$, James Moyne$^{1}$, Dawn Tilbury$^{1}$ 
\thanks{$^{1}$ Department of Mechanical Engineering, University of Michigan, Ann Arbor, MI, USA \textit{zhengwa, moyne, tilbury@umich.edu}} \thanks{$^{2}$ Department of Electrical Engineering and Computer Science, University of Michigan, Ann Arbor, MI, USA \textit{harirchi@umich.edu}} \thanks{$^{3}$ Software and Systems Divison, National Institute of Standards and Technology, Gaithersburg, MD, USA \textit{dhananjay.anand, cheeyee.tang@nist.gov}}
}

\begin{document}
\maketitle
\thispagestyle{plain}
\pagestyle{plain}

\begin{abstract}
This paper presents an anomaly detection method using a hybrid observer -- which consists of a discrete state observer and a continuous state observer. We focus our attention on anomalies caused by intelligent attacks, which may bypass existing anomaly detection methods because neither the event sequence nor the observed residuals appear to be anomalous. Based on the relation between the continuous and discrete variables, we define three conflict types and give the conditions under which the detection of the anomalies is guaranteed. We call this method conflict-driven anomaly detection. The effectiveness of this method is demonstrated mathematically and illustrated on a Train-Gate (TG) system.
\end{abstract}

\section{Introduction} \label{sec-motivation}
Cyber-Physical Systems (CPS) are systems that are shaped by a combination of computing devices, communication networks, and physical processes \cite{cardenas2009challenges}. The integration of these systems into our every-day life is inevitable. 
The performance and functionality of many critical infrastructures such as power, traffic and health-care networks and smart cities rely on the advances on CPS. A fault or an attack on one of these critical systems, may affect a large portion of society with serious and lethal consequences. As such, the safety and reliability of CPS becomes more and more crucial every day. Fault, attack and anomaly detection mechanisms play a vital role in providing such reliability and safety to CPS. In this paper, we propose an anomaly detection approach that provides formal detection guarantees for an extended class of anomalies in CPS. Similar to \cite{lopez2017categorization}, we refer to any occurrence that is different from what is standard, normal, or expected as \textit{anomaly}. In this paper, we utilize the rich dynamical behavior of mixed continuous and discrete (i.e., hybrid) systems \cite{wan2010composition} as our modeling framework to describe CPS. Even though the design and implementation of anomaly detection methods is significantly more challenging on hybrid models, we leverage these models, because of their advantage in better representing the real-world CPS.    

Our motivational example is a Train-Gate (TG) system, consisting of a train and a gate with a road crossing the track, as shown in Fig.\ref{fig-TGCsys}. It is an abstracted example that captures one of the important characteristics of a railway system which is railway level crossing control system.
The TG system is a hybrid system. The train with an internal controller for the train speed is the continuous system. 
An external controller changes the reference train speed based on the measured train position such that the train passes the gate at a lower speed. 
The gate is a discrete system, which is raised or lowered by a controller using two presence sensors located on both sides of the road. 
If sensor 1 detects the train, the gate must be lowered down to stop traffic on the road. If sensor 2 detects the train, the gate must be raised up to allow traffic on the road. 
Two monitors are used to detect anomalies. One monitor detects anomalies in the continuous train system, which uses the continuous system model and compares the measured variables with the estimated ones. The other monitor detects anomalies in the discrete gate system, which uses the discrete system model and compares the expected discrete event sequence with observed one. If an anomaly is detected from either of these monitors, some actions should be taken to mitigate its impact. 

\begin{figure}[!h]
	\centering
	\includegraphics[width=3in]{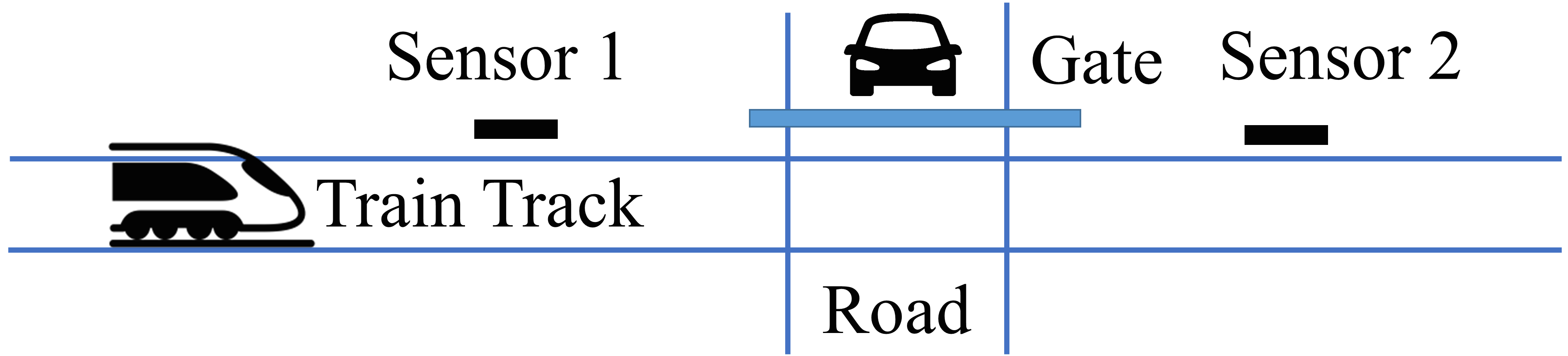}
	\caption{TG Schematic}
	\label{fig-TGCsys}
\end{figure} 

However, an attacker can launch an attack to cause an anomaly bypassing both monitors. For example, an anomalous ramp signal could be added to the measured train position without increasing the difference between the measured and the estimated variables. 
The drifted measured position can make the train pass the gate with a high speed, causing insufficient time to lower the gate. A driver may pass the gate, causing an accident. 

In order to detect this type of anomaly, we propose a higher level monitor to augment the previous two monitors. This new monitor uses a hybrid model of the system, and estimates both the continuous and the discrete variables. For the anomaly of a ramp signal injection on the train position, although the continuous system is anomalous, the discrete system is normal. If sensor 1 detects the train but the estimated train position indicates that the train is far away from sensor 1, a ``conflict'' between the continuous and the discrete variables occurs. 
This new monitor expands the types of anomalies that can be detected by checking the occurrence of conflicts, called conflict-driven method. Both mathematical demonstrations and simulation results illustrate the effectiveness of the conflict-driven method.

\section{Background and Contributions} \label{sec-background}
Various model-based anomaly detection methods have been developed for both continuous systems and discrete systems \cite{ding2008model, sayed2014discrete}. Even though discrete model-based anomaly detection methods are computationally efficient \cite{sampath1996failure}, they cannot provide sufficient resolution of continuous degradations for hybrid systems \cite{goodrich2001continuous}. 
Continuous model-based methods are impractical for the diagnosis of hybrid systems with a large number of discrete states because multiple continuous models need to run in parallel, each model corresponding to one discrete state \cite{zhao2005monitoring}.

As most CPS are hybrid, consisting of both continuous dynamics and discrete behavior, hybrid model-based approaches are promising in anomaly detection. 
Hybrid model-based anomaly detection includes set membership-based methods \cite{harirchi2017active} and observer-based methods \cite{hofbaur2004hybrid}. Given a data trajectory, set membership-based methods check whether the trajectory can be generated by the model. Even though these methods provide necessary and sufficient conditions in some cases for anomaly detection, they are computationally demanding, as they require costly set calculations or mixed integer programming. 
The set membership-based methods are also utilized in active fault diagnosis, where the goal is to design a minimal excitation that guarantees the detection of anomalous behavior \cite{campbell2015auxiliary, nikoukhah2006auxiliary, harirchi2016model}. 

Observer-based methods assume the continuous component of the hybrid model is observable under both normal and anomalous operations. For most observer-based methods, a residual, which is the difference between the estimated output and the actual output, is analyzed to determine the occurrence of an anomaly.
State estimation problem is directly related to observer-based methods. Among various hybrid state estimation methods, a hybrid observer is better for real-time computation since it can reduce the computational complexity \cite{hofbaur2004hybrid}. A hybrid observer consists of two components: a discrete state observer identifying what is the current discrete state, and a continuous state observer estimating the continuous state \cite{hofbaur2004hybrid,balluchi2002design}. With the hybrid observer framework, various traditional residual-based anomaly detection methods can be applied for hybrid systems, including different residual generation methods, such as the dedicated and generalized observer scheme \cite{clark1978instrument, wang2017improved}, and some residual evaluation methods, such as adaptive threshold \cite{emami1988effect}.

\subsection{Contributions}
Even though the residual-based methods are efficient, intuitive and easy to implement, they can easily be tricked by a smart attacker or by sensor faults that make the continuous system unobservable, causing anomalies. An example of such class of anomalies is described in Section \ref{sec-motivation}. 
In this paper, we propose a conflict-driven anomaly detection approach with three conflict types defined based on the relation between the discrete and the continuous variables of the hybrid systems
and in addition to faults that can be detected by traditional observer-based and residual-based methods, it is capable of providing guarantees on the detection of attacks and faults that are undetectable using these methods.

\section{Problem Formulation} \label{sec-problem}
In this section, we describe the modeling framework that we consider and the anomaly types that are of interest. Also, a review of utilized hybrid observer is given.

\subsection{Notation} \label{sec-notation}
Let $\|\cdot\|$ denote $\infty$-norm, $\tilde{\cdot}$ denote estimated variables, $\cupdot$ denote disjoint union, and $\Box \sigma$ denote the ball of center 0 and of radius $\sigma$. In addition, $\mathbf{x}\in \mathbb{R}^n$ represents a vector, where its $i^{th}$ element is indicated by $\mathbf{x}^{(i)}$. $\mathbf{A}
\in \mathbb{R}^{n\times m}$ represents a matrix. The linear span of a set of vectors is denoted by $span(\cdot)$. For a set $X \subset \mathbb{R}^n$, we denote its closure, interior, and boundary by $\overline{X}$, $X^o$ and $\partial X$ respectively. Clearly, $\partial X=\overline{X}\backslash X^o$. The volume of the closed set $\overline{X}$ is denoted by $Vol(\overline{X})$.
\subsection{Modeling Framework} \label{sec-systems}
\subsubsection{Hybrid Model} A hybrid system can be modeled as a hybrid automaton $\mathcal{H} = (\mathcal{X},\mathcal{U},\mathcal{Y}, Init, field, E, \phi, \eta)$, where each element is defined as
\begin{itemize}
\item $\mathcal{X} = Q \times X$: a set of discrete and continuous states
\item $\mathcal{U}=\Psi \times U$: a set of discrete and continuous inputs
\item $\mathcal{Y} = \Omega \times Y$: a set of discrete and continuous outputs
\item $Init = (q(t_0), \mathbf{x}(t_0)) \in \mathcal{X}$: an initial state
\item $field: \mathcal{X} \times \mathcal{U} \rightarrow X$: a time invariant vector field
\item $E=\Psi\cupdot \Omega$: a set of discrete events
\item $\phi: Q \times \Psi \rightarrow Q$: a set of discrete transitions
\item $\eta: \mathcal{X}\times \mathcal{U} \rightarrow \mathcal{Y}$: an output map consisting of a discrete output map $\zeta$ and a continuous output equation $h$

$\zeta: Q \times \Psi \rightarrow \Omega$: a discrete output map

$h: \mathbf{y}(t) = \mathbf{C}_q\mathbf{x}(t)+\mathbf{v}(t)$: a continuous output equation
\end{itemize}

The hybrid models considered in this paper capture both nominal system model with a set of nominal discrete states $Q_n$ and anomaly models with a set of anomalous discrete states $Q_f$. The set of all discrete states is
defined as $Q = Q_n \cupdot Q_f$. The nominal hybrid system $\mathcal{H}_n$ can be derived by removing $Q_f$ and the events and transitions connecting $Q_f$. 
The initial state $Init$, which is a combination of initial discrete state $q(t_0) \in Q_n$ and initial continuous state $\mathbf{x}(t_0)$, is not required to be known. 

For each discrete state $q\in Q$, we consider continuous dynamics that can be represented by a Linear Time Invariant (LTI) model, subject to process and measurement noise.
\begin{equation} \label{eq-linearsys}
	\begin{aligned}
	field:& \mathbf{x}(t+1)=\mathbf{A}_q\mathbf{x}(t)+\mathbf{B}_q\mathbf{u}(t)+\mathbf{w}(t),\\
    h: &\mathbf{y}(t)=\mathbf{C}_q \mathbf{x}(t)+\mathbf{v}(t),
	\end{aligned}
\end{equation}
where $\mathbf{A}_q\in \mathbb{R}^{n\times n}, \mathbf{B}_q\in \mathbb{R}^{n\times n_u}, \mathbf{C}_q \in \mathbb{R}^{n_y\times n}$ are system matrices, $\mathbf{x}\in X\subset \mathbb{R}^{n}$, $\mathbf{u}\in U\subset \mathbb{R}^{n_u}$ and $\mathbf{y}\in Y\subseteq \mathbb{R}^{n_y}$ are continuous states, inputs and outputs, respectively. The process and measurement noise are represented by $\mathbf{w}\sim \mathcal{N}(0,\mathbf{W})$ and $\mathbf{v}\sim \mathcal{N}(0,\mathbf{V})$, respectively, where $\|\mathbf{w}\|\leq w$ and $\|\mathbf{v}\|\leq v$. Each entry of the process and measurement noise has its bound, i.e., $|\mathbf{w}^{(i)}|\leq w_i$ and $|\mathbf{v}^{(i)}|\leq v_i$. The continuous dynamical models of the system in anomalous discrete states are not required to be known. 
To simplify the notation, we assume:
\begin{assumption} \label{assump-cmatrix}
The output matrix $\mathbf{C}_q$ is an identity matrix in all discrete states, i.e., $\forall q\in Q, \mathbf{C}_q=\mathbf{I}$.
\end{assumption}
We can easily extend our work to general $\mathbf{C}$ matrix assuming the continuous system is observable.

Discrete events $E$ can be partitioned into observable events $E_o$ and unobservable events $E_{uo}$, i.e., $E=E_o\cupdot E_{uo}$. Only observable events can be detected by an observer. We denote the set of observable input events as $\Psi_o$ and a set of unobservable input events as $\Psi_{uo}$. Obviously, all of the output events are observable.  

The $i^{th}$ discrete event occurs at time $t_i$. The continuous evolutions occur in time $t\in [t_{i-1}+1, t_i],\forall i=1,2,...$.
In reality, discrete events may occur between two adjacent sample times. We assume 
\begin{assumption} \label{assump-inputevent}
The occurrence of the discrete events can be captured at sample times. At most one input event occurs within one sampling period. An output event occurs simultaneously with an input event.  
\end{assumption}
Note that the discrete state is changed one time step after a discrete input event occurs, that is $\phi(q(t_i),\psi)=q'(t_i+1)$, where $q(t_i), q'(t_i+1)\in Q$. To each discrete transition $\phi(q,\psi)=q'$, we associate a guard:
\begin{equation}
G(q,q',\psi)=\{\mathbf{x}: s_G\mathbf{x}^{(i_G)}\geq c_G\},
\end{equation}
where $c_G$ is a constant value and $s_G$ is either $-1$ or $1$. A guard is a closed half-space divided by the hyperplane 
\begin{equation} \label{eq-hyperplane}
\begin{aligned}
\mathcal{P}(q,q',\psi)=\{\mathbf{x}: \mathbf{x}^{(i_G)}= s_Gc_G\}.
\end{aligned}
\end{equation}
A guard $G(q,q',\psi)$ indicates that the transition $\psi$ will take place if and only if the $i_G^{th}$ state variable of $s_G\mathbf{x}$ is no smaller than $c_G$ in discrete state $q$. 

To each discrete state $q \in Q$, we associate an invariant:
\begin{equation} \label{eq-invariant}
Inv_{q} = \{\mathbf{x}: \forall i=1,...,n, \underline{\beta}_i\leq \mathbf{x}^{(i)}\leq \overline{\beta}_i, \}\subseteq X,
\end{equation} 
where $\underline{\beta}_i$ and $\overline{\beta}_i$ are constant values. An invariant is a hyperrectangle with bounded intervals on each continuous state variable. An invariant $Inv_{q}$ indicates that the system can remain in the discrete state $q$ if and only if the continuous state $\mathbf{x} \in Inv_{q} \backslash \bigcup_j G(q,q_j,\psi_j)$. 

Our definitions of guard $G(q,q',\psi)$ and invariant $Inv_q$ indicate that $c_G$ is between the lower and upper bounds of the state variable $\mathbf{x}^{(i_G)}$ of the invariant $Inv_q$, i.e., $\underline{\beta}_{i_G}\leq c_G \leq \overline{\beta}_{i_G}$. We define a neighbor hyperplane of guard $G(q,q',\psi)$ as
\begin{definition}
(Neighbor hyperplane of guard $G(q,q',\psi)$) is one of the hyperplanes forming the boundary of the invariant $\partial Inv_q$, which is defined as follows: 
\begin{equation} \label{eq-neighbor}
\begin{aligned}
\mathcal{L}(q,q',\psi) = \{\mathbf{x}\in X: & |\mathbf{x}^{(i_G)}-\mathbf{x}'^{(i_G)}|=\min(c_G-\underline{\beta}_{i_G}, \overline{\beta}_{i_G}-c_G) \\
&\land \mathbf{x}^{(i_G)} \in \{\underline{\beta}_{i_G},\overline{\beta}_{i_G}\}, \mathbf{x}'\in \mathcal{P}(q,q',\psi)\}.
\end{aligned}
\end{equation}
\end{definition}
An example of neighbor hyperplane of $G(q,q',\psi)$ is shown in Fig.\ref{fig-getregions}.
To simplify notation, we denote $c_{\mathcal{L}}$ as the value of $\mathbf{x}^{(i_G)}$, where $\mathbf{x}\in \mathcal{L}(q,q',\psi)$.
If $\mathcal{P}(q,q',\psi)$ forms one of the hyperplanes of $\partial Inv_q$, then $\mathcal{L}(q,q',\psi)= \mathcal{P}(q,q',\psi)$. Otherwise, $\mathcal{L}(q,q',\psi)\cap \mathcal{P}(q,q',\psi)=\emptyset$. Discontinuities may exist in continuous variables due to discrete transitions in general hybrid systems. However, in our hybrid system formalism, no discontinuities exist in the continuous variables. This is imposed without any reset maps. 

The hybrid observer used in this paper is proposed in \cite{balluchi2002design}, which is designed based on the Finite State Machine (FSM) associated with the nominal hybrid model. The FSM $\mathcal{M}_n$ is derived by removing all of the continuous dynamics in $\mathcal{H}_n$, and is represented by tuple $(Q, \Psi, \Omega, q(t_0), E, \phi, \zeta)$. 
In order to get a unique estimate of the discrete state with the hybrid observer after finite observable events, we assume
\begin{assumption} \label{assump-cso}
The FSM $\mathcal{M}_n$ is current-state observable.
\end{assumption}
Current-state observable is defined in \cite{balluchi2002design}. 
\begin{definition}
(Current-State Observable) A FSM is current-state observable if there exists an integer $k$ such that for any unknown initial discrete state, the discrete state at $i$ can be determined from the observed input/output event pairs sequence up to $i$, i.e., $i\geq k$. 
\end{definition}
Note that one input/output event pair is considered as one input event to the hybrid observer. 
Thus, after the $k^{th}$ input/output event pair occurs, the hybrid observer can give a unique estimated discrete state.
The necessary and sufficient condition of current state observability is given in \cite{balluchi2002design}.

\subsubsection{Nominal Discrete States} \label{sec-nominalstate}
We partition the invariants of the nominal discrete states into an intermediate region and several normal operating regions. The intermediate region $\mathcal{R}_{in}$ is the union of all the intersections between the invariants of any two nominal discrete states
\begin{equation}
\begin{aligned}
\mathcal{R}_{in} = \{\mathbf{x}\in X: \forall q,q'\in Q_n, q\neq q'\land \mathbf{x}\in Inv_q\cap Inv_{q'} \}.
\end{aligned}
\end{equation}

For discrete state $q\in Q_n$, we define a normal operating region as the set of continuous states that are in the invariant but not the intermediate region, 
\begin{equation}
\begin{aligned}
\mathcal{R}_{no,q} = \overline{Inv_q}\backslash \overline{\mathcal{R}_{in}}.
\end{aligned}
\end{equation}
 
We pose an assumption on the hybrid model to restrict state-space abstraction method. This assumption helps select the appropriate hybrid model of the system with which the conflict-driven method can provide detection guarantees. 
\begin{assumption} \label{assump-nonintersecting}
The intermediate region is bounded by the hyperplane $\mathcal{P}(q,q',\psi)$ corresponding to the guard $G(q,q',\psi)$ and the neighbor hyperplane of $G(q,q',\psi)$ and $\partial Inv_q$ in each discrete state.
\begin{equation}
\begin{aligned}
\mathcal{R}_{in} \subset \bigcup_{q\in Q_n} \{\mathbf{x}\in Inv_q: & \forall q_j\in Q_n: \exists \mathcal{P}(q,q_j,\psi_j), \\
& \min(c_G, c_\mathcal{L})\leq \mathbf{x}^{(i_G)}\leq \max(c_G, c_\mathcal{L})\}.
\end{aligned}
\end{equation}
\end{assumption}
The visualization of this assumption on a $2$-dimensional system is shown in Fig.\ref{fig-getregions}. Assumption \ref{assump-nonintersecting} indicates $\mathcal{L}(q,q',\psi)$ is one of the hyperplanes forming $\partial Inv_{q'}$.

\begin{figure}[thpb]
      \centering
      \includegraphics[width=1.8in]{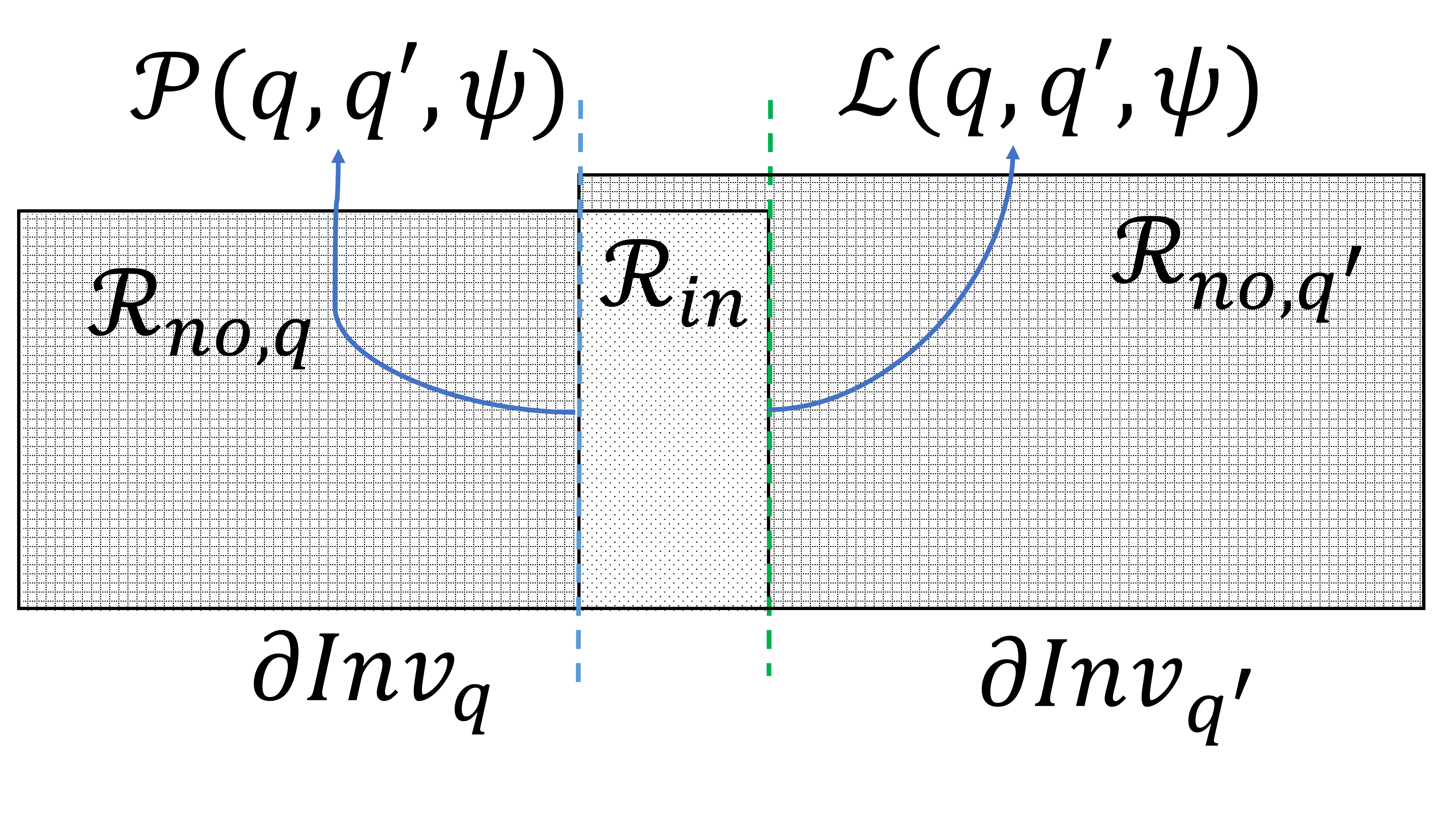}
      \caption{Normal operating and intermediate regions} 
      \label{fig-getregions}
\end{figure}

The basic principle of the conflict-driven method is to check at each time step, whether or not the sets of continuous states, which are calculated based on the estimated continuous state, intersect with the invariant of the estimated discrete state. The sets of continuous states include an initial set with the estimated continuous state as the center, and a forward reachable set which is the set of all continuous states that can be reached along trajectories starting in the initial set. 
Reachable set calculation requires following assumptions. 
\begin{assumption} \label{assump-stable}
The continuous system in each nominal discrete state is open-loop stable or marginally stable, i.e., $|\lambda(\mathbf{A}_{q})|\leq 1$, where $\lambda(\mathbf{A}_{q})$ are the eigenvalues of $\mathbf{A}_{q}$. 
\end{assumption}
\begin{assumption} \label{assump-input}
The continuous input signal is bounded, and the bound is known, i.e., $||\mathbf{u}||\leq \mu$. 
\end{assumption}

A great deal of attention has been given to algorithms and software developed for analysis of hybrid systems. To date, the most efficient way to compute the reachable set is to use zonotopes \cite{girard2005reachability}. A zonotope is a Minkowski sum of a finite set of line segments, defined as
\begin{definition}
(Zonotope $Z$) is a set such that:
\begin{equation}
\begin{aligned}
Z =& (\mathbf{x_c}, <\mathbf{g_1},..., \mathbf{g_p}>) \\
=& \{\mathbf{x}\in \mathbb{R}^n:\mathbf{x} = \mathbf{x_c}+\Sigma_{i=1}^{i=p}b_i\mathbf{g_i}, -1\leq b_i \leq 1 \}, p\geq n,
\end{aligned}
\end{equation}
where $\mathbf{x_c}, \mathbf{g_i} \in \mathbb{R}^n$ are the center and generators, respectively. 
\end{definition}
Both $p$ and $n$ determine the maximum number of vertices and facets. 

\subsection{Hybrid Observer} \label{sec-hybridobs}
Given the nominal hybrid model $\mathcal{H}_n$, we can design a hybrid observer to estimate both the discrete state and the continuous state of the system using the method in \cite{balluchi2002design}.
The hybrid observer $\mathcal{O}$ consists of a discrete state observer $\mathcal{D}$ and a continuous state observer $\mathcal{C}$, as shown in Fig.\ref{fig-relation}. The discrete state observer receives discrete input/output event pair $(\psi,\omega)$ and gives $\tilde{q}$. 
The estimated discrete state $\tilde{q}$ contains a set of estimated discrete states before the occurrence of the $k^{th}$ observable input/output event pair.
After the occurrence of the $k^{th}$ observable input/output event pair, $\tilde{q}$, which contains a unique estimate, is passed to the corresponding continuous state observer. Then the continuous state observer gives an estimated continuous state $\tilde{\mathbf{x}}$ using the continuous input $\mathbf{u}$ and output $\mathbf{y}$.

The discrete state observer is represented by a FSM which is a tuple $\mathcal{D} = (\tilde{Q}, E_{\mathcal{D}}, -, Q_n, E_{\mathcal{D}}, \tilde{\phi}, -)$, where $E_{\mathcal{D}} = (\Psi, \Omega)$ is the set of discrete input/output event pairs of $\mathcal{M}_n$. The discrete state observer is tracking the set of possible discrete states that the system can be in. Therefore, no discrete output events or discrete map are defined for discrete state observer.

The construction of $\mathcal{D}$ starts from $\tilde{q}(t_0)$: with unknown initial discrete state of $\mathcal{M}_n$, $\tilde{q}(t_0)=Q_n$. Then for each discrete state $\tilde{q} \in \tilde{Q}$, we identify the input/output event pairs $(\psi,\omega)$, that label all the transitions out of any state $q'$ in $\tilde{q}$. These events are called active event set of $\tilde{q}$. For each pair $(\psi,\omega)$ in the active event set, we identify $q\in Q_n$ that can be reached from $q' \in \tilde{q}$, and these states return as a new $\tilde{q}$ in $\tilde{Q}$. This transition is added to $\tilde{\phi}$ satisfying
\begin{equation}
\begin{aligned}
\tilde{\phi}&:= \{q\in Q_n:\exists q'\in \tilde{q}, \, q\in \phi(q',\psi)\land \omega = \zeta(q',\psi)\}.
\end{aligned}
\end{equation}
Repeat this step until no new $\tilde{q}$ and $\tilde{\phi}$ can be added to $\mathcal{D}$.

e state observer. In this step, we identify the input and output event pair $(\psi,\omega)$, that label all the transitions out of any state $q'$ in $\tilde{q}$. These events are called active event set of $\tilde{q}$. For each pair $(\psi,\omega)$ in the active event set, we identify $q\in Q_n$ that can be reached from $q' \in \tilde{q}$, and these states return as a new $\tilde{q}$ in $\tilde{Q}$. This transition is added to the transition function of observer $\mathcal{D}$. The output function $\tilde{\zeta}$ equals to $\tilde{\phi}$ because the output of the discrete state observer is the estimated discrete state. Note that $\tilde{Q}$ may contain a set of discrete states of the plant system.

To reduce the effect of system noise on state estimation, we use a Kalman filter as the continuous state observer, with Kalman gain $\mathbf{K}_{q(t)}$. It is well known that the Kalman gain will converge in a few steps in practice if the system is observable \cite{mo2010false}. 
We can use the steady state Kalman gain given in \cite{mo2010false}, with which the eigenvalues of $(\mathbf{A}_q-\mathbf{K}_q\mathbf{A}_q)$ are stable. Note that we have different Kalman gains for different discrete states. Let us define
\begin{definition}
(Dwell time $\Delta t$) is the minimum time to guarantee the convergence of the estimation error. 
\end{definition}
Dwell time $\Delta t$ should satisfy the condition in section 3.2 in \cite{balluchi2002design}, Then we assume: 
\begin{assumption}
The time gap between any two consecutive transitions is greater than dwell time, i.e., $t_{i}-(t_{i-1}+1)>\Delta t$.
\end{assumption}
With bounded noise, we design Kalman filter such that the estimation error $\mathbf{x_e}(t)=\mathbf{x}(t)-\tilde{\mathbf{x}}(t)$ is bounded when the Kalman filter reaches its steady state, i.e., $\exists t_{ss}, \|\mathbf{x_e}(t)\| \leq \theta, t>t_{ss}$. The residual $\mathbf{r}$ of the system is defined as the difference between the measure output and the estimated output,
\begin{equation} \label{eq-residual}
\begin{aligned}
\mathbf{r}(t)=&\mathbf{y}(t)-\tilde{\mathbf{x}}(t).
\end{aligned}
\end{equation}
In the nominal discrete states, the residual $\mathbf{r}(t), t>t_{ss}$ is bounded by $\theta+v$ because of bounded estimation error and noise. If $\|\mathbf{r}(t)\|>\theta+v, t>t_{ss}$, then the system is in an anomalous discrete state.

\subsection{Anomalous Discrete States}\label{sec-anomalousstate}
An anomaly $f\in \Psi_{uo}$ is defined as an unobservable input event that transits the system from a nominal discrete state $q_n\in Q_n$ to an anomalous discrete state $q_f\in Q_f$. Arguably, the multiplicative anomalies can be represented by additive anomaly models (e.g., Section 3.5 in \cite{ding2008model}). Thus, we restrict our attention to additive anomaly models as follows.
\begin{equation} \label{eq-additiveanomaly} 
\begin{aligned}
\mathbf{y}(t)&=\mathbf{x}(t)+\mathbf{v}(t) +\pmb{\Gamma }\pmb{\gamma}(t),
\end{aligned}
\end{equation}
where $\pmb{\Gamma}\in \mathbb{R}^{n\times n}$ is a diagonal matrix with binary variables. The $i^{th}$ diagonal variable is 1 if and only if the $i^{th}$ output is added with an anomalous signal $\pmb{\gamma}(t)\in \mathbb{R}^n$. 
Then the residual in anomalous discrete states is changed to 
\begin{equation} \label{eq-residualanomalous}
\begin{aligned}
\mathbf{r}(t)=\mathbf{x_e}(t)+\mathbf{v}(t)+\pmb{\Gamma\gamma}(t).
\end{aligned}
\end{equation}
The conflict-driven method is guaranteed to detect the anomalies that are not consistent with the continuous dynamics of the system, i.e, the anomalies that make the residual greater than threshold $\theta+v$. This is because of leveraging continuous state observer that is described in the previous subsection. Additionally, the proposed method extends the types of anomalies that can be detected compared to the methods mentioned in Section \ref{sec-background}. 

Perfectly attackable systems are defined by Mo, et al. in \cite{mo2010false} as continuous systems for which anomalies caused by certain attacks can remain undetected, i.e., the residual will not increase. One of the conditions for a continuous system to be perfectly attackable is that the state matrix $\mathbf{A}_{q_f}$ has at least one unstable or marginally stable eigenvalue. 
If the continuous system only has stable eigenvalues, anomalies on the system will increase the residual.
The smart attacks that cannot be detected in perfectly attackable systems are called False Data Injection Attack (FDIA) as defined and demonstrated in \cite{mo2010false}. One of the conditions of FDIA is that the eigenvector $\pmb{\xi}$ corresponding to an unstable or marginally stable eigenvalue of $\mathbf{A}_{q_f}$ is in the span of $\pmb{\Gamma}$, i.e., $\pmb{\xi}\in span(\pmb{\Gamma})$. If $\pmb{\xi}\not \in span(\pmb{\Gamma})$, the anomaly will increase the residual and will be detected by the Kalman filter implemented as the continuous state observer in conflict-driven method. 

As mentioned before, detecting FDIA type anomalies is challenging, as their effect cannot be observed in the value of residual. In addition to anomalies that can be detected by checking the residual, our main contribution is to also guarantee the detection of this particular type of anomalies, if they satisfy certain conditions (explained in Section \ref{sec-solution}). Let us define Type-$C_u$ anomalies for the hybrid systems as:

\begin{definition}
(Type-$C_u$ anomaly) is an anomaly that is caused by False Data Injection Attack. If an anomaly occurs at time $t_f$, it satisfies the following two conditions.

1) The input-output sequence generated from the anomalous discrete state satisfies the continuous dynamics of the nominal discrete states for $t \geq t_f$, that is, the residual does not grow larger than the threshold $\theta+v$.

2) The occurrence of the anomaly results in:
	\[
	\text{for  } t\geq t_f, \text{ if } q\in Q_f \implies \|\mathbf{x_e}(t)\|>\theta .
	\]
\end{definition}

Our objective is to extend the detection guarantees to the class of Type-$C_u$ anomalies. In order to establish the goal, we also assume that:
\begin{assumption}\label{assump-anomaytime}
An anomaly occurs after the continuous state observer enters its steady state, i.e., $t_f\geq t_{ss}$.
\end{assumption}

\section{Conflict-driven Anomaly Detection Method} \label{sec-solution}
In the conflict-driven method, we define three conflict types. This method checks the occurrence of the conflicts to detect anomalies. The work flow diagram is shown in Fig.\ref{fig-relation}. Note that this method is used after the hybrid observer is in the steady state, i.e., $t\geq t_{ss}$.

\begin{figure}[thpb]
      \centering
      \includegraphics[width=3.25in]{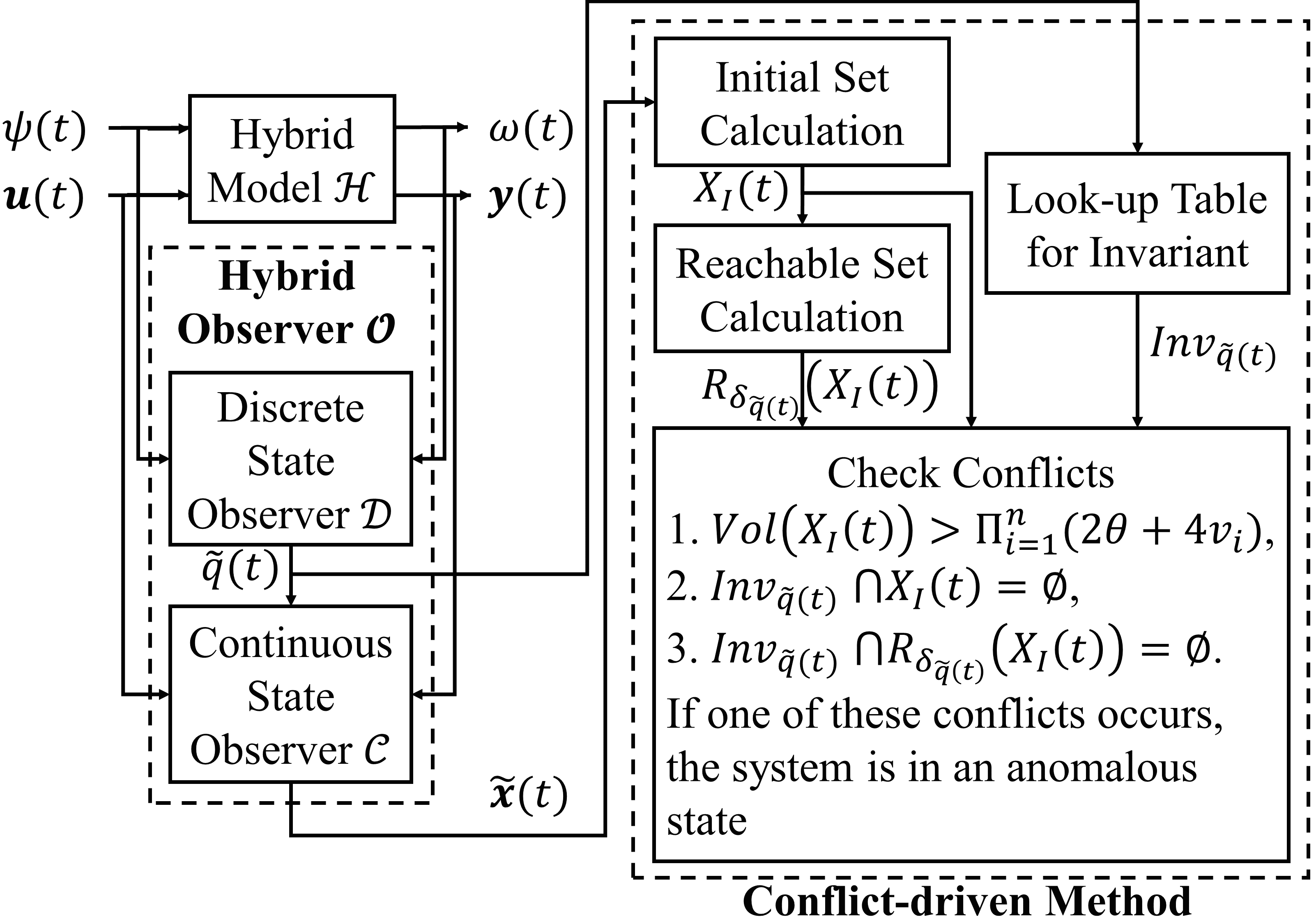}
      \caption{Conflict-driven anomaly detection method}
      \label{fig-relation}
\end{figure}

The conflict-driven method has three steps:

\subsubsection{Calculate an initial set $X_I(t)$}

An initial set $X_I(t)$ is constructed as a zonotope based on $\tilde{\mathbf{x}}(t)$ and $\mathbf{r}(t)$, as $X_I(t) = (\tilde{\mathbf{x}}(t), <\mathbf{g_1}, ..., \mathbf{g_n}>)$. The $i^{th}$ generator $\mathbf{g_i}^{(i)} = |\mathbf{r}^{(i)}(t)|+v_i$. Other entries of vector $\mathbf{g_i}$ are zero. Based on Equation \eqref{eq-residual}, we have $|\mathbf{x_e}^{(i)}(t)|\leq |\mathbf{r}^{(i)}(t)|+v_i$ in nominal discrete states. Thus, we can ensure $\mathbf{x}(t)\in X_I(t)$ when the system is in nominal discrete states. The initial set is changing at each time step because of the changes in the estimated continuous state and the residual.  

\subsubsection{Calculate the reachable set $R_{\delta_{\tilde{q}(t)}}(X_I(t))$}

The $\delta_{\tilde{q}(t)}$ time-step forward reachable set $R_{\delta_{\tilde{q}(t)}}(X_I(t))$ is calculated starting from $X_I(t)$ constructed in Step 1. It satisfies
\begin{equation}
\begin{aligned}
R_{\delta_{\tilde{q}(t)}}(X_I(t)) \subseteq \mathbf{A}_{\tilde{q}(t)}^{\delta_{\tilde{q}(t)}} X_I(t)+\Box \sigma_{\tilde{q}(t)}
\end{aligned}
\end{equation}
where $\sigma_{\tilde{q}(t)}=\frac{1-\|\mathbf{A}_{\tilde{q}(t)}\|^{\delta_{\tilde{q}(t)}}}{1-\|\mathbf{A}_{\tilde{q}(t)}\|}(\|\mathbf{B}_{\tilde{q}(t)}\|\mu+w)$. For more details about reachable set calculation using zonotopes, refer to \cite{girard2005reachability}.

\subsubsection{Check conflicts}

We define three conflict types in this paper, as shown in Fig.\ref{fig-conflicts}:

Conflict $A$. The volume of the initial set is larger than the bound, i.e., $Vol(X_I(t))>\Pi_{i=1}^n(2\theta+4v_i)$

Conflict $B$. The initial set has no intersection with the invariant of the estimated discrete state ($X_I(t)\cap Inv_{\tilde{q}(t)} = \emptyset$)

Conflict $C$. The $\delta_{\tilde{q}(t)}$ time steps forward reachable set has no intersection with the invariant of the estimated discrete state, i.e., $R_{\delta_{\tilde{q}(t)}}(X_I(t)) \cap Inv_{\tilde{q}(t)} = \emptyset$.

If one of these conflicts occurs, the system is in an anomalous state. 

\begin{figure}[thpb]
      \centering
      \includegraphics[width=3.2in]{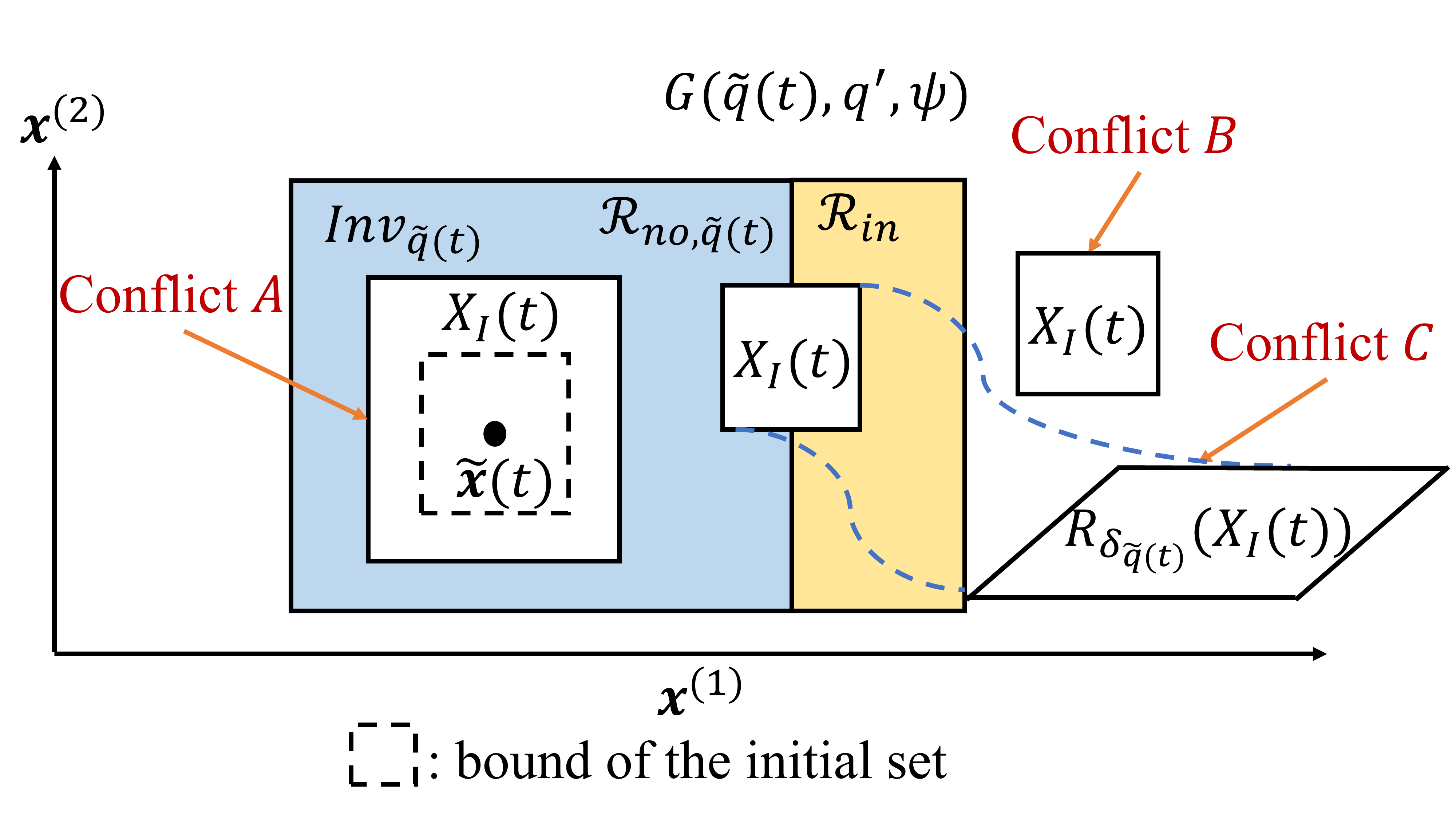}
      \caption{Three conflict types}
      \label{fig-conflicts}
\end{figure}

Note that for Step 2, we do not consider the discrete behavior in reachability analysis. The reachable set could be completely outside the invariant if $\delta_{\tilde{q}(t)}$ is too large, causing false alarms. To avoid false alarms and provide detection guarantees, we determine the time steps $\delta_{q}$ for each discrete state with given nominal hybrid model $\mathcal{H}_n$ of the system according to the following steps:

1) In $Inv_q$, starting from the intersection of the hyperplane corresponding to the $i^{th}$ guard $G(q,q_i,\psi_i)$ as defined by \eqref{eq-hyperplane} and $Inv_q$, we find the minimum time steps $\delta_{q,i}$ which satisfies
\begin{equation} \label{eq-delta1}
\begin{aligned}
R_{\delta_{q,i}+1}(\mathcal{P}(q,q_i,\psi_i)\cap Inv_q)  &\cap \mathcal{L}(q,q_i,\psi_i) \neq \emptyset 
\end{aligned}
\end{equation}
Note that $\delta_{q,i}$ may be different for different guards in the same discrete state. The reason we use $\delta_{q,i}+1$ is that the continuous system is a discrete-time model and we want to ensure the $\delta_{q,i}$ time-step forward reachable set, starting from any possible real continuous state when a transition occurs, has intersection with $Inv_q$ in nominal discrete states.

2) Let $\delta_{q}=\min_i(\delta_{q,i})$. If the distance between $\mathcal{P}(q,q_i,\psi_i)$ and $\mathcal{L}(q,q_i,\psi_i)$ is small, $\delta_{q}$ may be $0$. Then we only need to check Conflicts $A$ and $B$ in discrete state $q$.

Following proposition and theorem demonstrate the effectiveness of the conflict-driven method. We give some intuitions first. Proposition \ref{lemma-initial} gives the upper bound for the volume of the initial set. Based on Assumption \ref{assump-cmatrix}, in a nominal discrete state, the estimation error of the continuous state, as well as the residual, should converge. Therefore, an upper bound exists for the volume of the initial set $Vol(X_I(t)), t>t_{ss}$, as demonstrated in Proposition \ref{lemma-initial}. The increase of $Vol(X_I(t))$ indicates the increase of the residual. Conflict $A$ can detect anomalies that increase the residual. Since the main contribution of this paper is focusing on the detection of Type-$C_u$ anomaly which does not increase the residual, finding the lower bound of the anomalous signal which causes conflict $A$ and the conditions under which a residual-based method is equivalent to checking Conflict $A$ are part of our future work. 

\begin{proposition} \label{lemma-initial}
Given a nominal hybrid automaton $\mathcal{H}_n$ and a hybrid observer $\mathcal{O}$ with bounded estimation error in steady state, i.e., $\forall t>t_{ss}, \|\mathbf{x_e}(t)\|\leq \theta$, the volume of the initial set is also bounded, i.e., $Vol(X_I(t))\leq \Pi_{i=1}^n(2\theta+4v_i)$.
\end{proposition}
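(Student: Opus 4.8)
The plan is to exploit the very special structure of the initial set $X_I(t)$. Because its $i^{th}$ generator $\mathbf{g}_i$ has a single nonzero entry, located in the $i^{th}$ coordinate, the $n$ generators are mutually orthogonal and each is aligned with a distinct coordinate axis. Consequently $X_I(t)$ is not a generic zonotope but an axis-aligned hyperrectangle centered at $\tilde{\x}(t)$ whose half-width in the $i^{th}$ direction is exactly $|\rr^{(i)}(t)|+v_i$ (this is the range of $\tilde{\x}^{(i)}(t)+b_i\,\mathbf{g}_i^{(i)}$ as $b_i$ ranges over $[-1,1]$). The volume of such a box factors into a product of side lengths, so the first step I would carry out is to write
\begin{equation*}
Vol(X_I(t)) = \prod_{i=1}^{n} 2\bigl(|\rr^{(i)}(t)|+v_i\bigr).
\end{equation*}
This reduces the claim to a per-coordinate residual bound, namely to showing $|\rr^{(i)}(t)|\le \theta+v_i$ for every $i$ in the nominal discrete states.

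Next I would establish that bound directly from the residual identity. Under Assumption \ref{assump-cmatrix} the output obeys $\mathbf{y}(t)=\x(t)+\vv(t)$ in a nominal discrete state, so Equation \eqref{eq-residual} gives $\rr(t)=\x(t)+\vv(t)-\tilde{\x}(t)=\xe(t)+\vv(t)$. Taking the $i^{th}$ component and applying the triangle inequality yields $|\rr^{(i)}(t)|\le |\xe^{(i)}(t)|+|\vv^{(i)}(t)|$. The steady-state hypothesis $\|\xe(t)\|\le\theta$, read with the $\infty$-norm convention fixed in the notation, bounds $|\xe^{(i)}(t)|\le\theta$ for each coordinate, while the entrywise measurement-noise bound gives $|\vv^{(i)}(t)|\le v_i$. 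Combining these produces $|\rr^{(i)}(t)|\le\theta+v_i$, valid for all $t>t_{ss}$ and all nominal discrete states.

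Finally I would substitute this into the factored volume and collect terms:
\begin{equation*}
Vol(X_I(t)) \le \prod_{i=1}^{n} 2\bigl((\theta+v_i)+v_i\bigr) = \prod_{i=1}^{n}(2\theta+4v_i),
\end{equation*}
which is precisely the claimed inequality. The only step that warrants genuine care -- and hence the one I would spell out most explicitly -- is the first: recognizing that the orthogonal, axis-aligned generator structure collapses the general zonotope volume expression (a sum of $2^n|\det(\cdot)|$ over all $n$-element subsets of generators) into a single product of side lengths, since here $p=n$ and the one surviving determinant is diagonal. Everything downstream is a routine application of the triangle inequality together with the stated estimation-error and noise bounds, so I do not anticipate any real obstacle beyond this structural observation.
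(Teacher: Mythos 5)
Your proposal is correct and follows essentially the same route as the paper's proof: both express $Vol(X_I(t))$ as the product of side lengths $\prod_{i=1}^n 2\bigl(|\mathbf{r}^{(i)}(t)|+v_i\bigr)$, bound each residual component by $\|\mathbf{x_e}(t)\|+v_i\leq\theta+v_i$ via the triangle inequality applied to $\mathbf{r}(t)=\mathbf{x_e}(t)+\mathbf{v}(t)$, and multiply out to get $\prod_{i=1}^n(2\theta+4v_i)$. The only difference is that you explicitly justify why the axis-aligned generator structure collapses the zonotope volume to a product of side lengths, a step the paper takes for granted.
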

\begin{proof}
In steady state, $\forall t>t_{ss}$,
\begin{equation*}
\begin{aligned}
Vol(X_I(t)) =&\Pi_{i=1}^n(2(|\mathbf{r}^{(i)}(t)|+v_i))\leq \Pi_{i=1}^n(2(\|\mathbf{x_e}(t)\|+2v_i)) \\
\leq& \Pi_{i=1}^n(2\|\mathbf{x_e}(t)\|+4v_i) \leq \Pi_{i=1}^n(2\theta+4v_i)
\end{aligned}
\end{equation*}
\end{proof}

As discussed before, Type-$C_u$ anomaly affects the continuous outputs of the system, but can remain undetectable by residual-based methods and unobserved by discrete state observer. In order to detect this type of anomaly, we leverage the estimated states from both continuous and discrete observers, and take advantage of observation of a discrete event. This enables us to employ the contradictions among estimated continuous and discrete states and the model parameters such as guards and invariants to detect these challenging anomalies. These contradictions are formalized in Conflicts $B$ and $C$.
In what follows, we set the stage to present the main contribution of this paper, which is Theorem \ref{lemma-type2}. This theorem provides sufficient conditions on the lower bound of the anomalous signal, under which the conflict-driven method is guaranteed to detect Type-$C_u$ anomalies. Towards this goal, we first find the lower bound of the estimation error that creates one of Conflicts $B$ or $C$, and then relate this bound to the lower bound on the anomalous signal according to \eqref{eq-residualanomalous}.

Let us assume that a Type-$C_u$ anomaly occurs at time $t_f$ which causes a large estimation error on the $i_G^{th}$ state variable, i.e., $|\mathbf{x}_e^{(i_G)}| > \theta$, and a discrete event $\psi$ occurs at time $t_e$ which associates a guard with condition on the $i_G^{th}$ state variable, i.e., $\{\mathbf{x}\in Inv_q: s_G\mathbf{x}^{(i_G)}\geq c_G\}$.
Without loss of generality, we assume that the projection of $\mathcal{R}_{no,q}^{o}$ onto $\mathbf{x}_e^{(i_{G})}$ is bounded above by $c_{G}$, i.e., $\HH_{i_G}\mathcal{R}_{no,q}^{o}\leq c_{G}$ (because $s_G=1$), where $\HH_{i_G} \in \mathbb{R}^n$ is the projection row vector with the $i_G^{th}$ entry ``$1$'' and ``$0$'' elsewhere. The procedure for the case where $\HH_{i_G}\mathcal{R}_{no,q}^{o}\geq -c_{G}$ (because $s_G=-1$) is identical.  When this event occurs, we can only have two possibilities for the estimated state at time $t_e$, either $\tilde{\mathbf{x}}(t_e)\in \mathcal{R}_{no,q}^{o}$, or $\tilde{\mathbf{x}}(t_e)\in \mathcal{R}_{in}^{o}\cap Inv_q$. Based on our definitions of guard, invariant, neighbor hyperplane of the guard, and Assumption \ref{assump-nonintersecting}, along the $i_G^{th}$ state variable, the upper bound of $Inv_q$ is $c_{\mathcal{L}}$ and the lower bound of $Inv_{q'}$ is $c_G$.
For brevity in notation and as in this section we mainly consider $G(q,q',\psi)$, we refer to it as $G$.

First, consider the case where $\tilde{\mathbf{x}}(t_e)\in \mathcal{R}_{no,q}^{o}$, that is, when the real continuous state satisfies the guard, the estimated state is in the normal operating region of discrete state $q$. 
The goal is to find the lower bound of the estimation error along the $i^{th}_G$ state variable, such that:
\begin{itemize}
	\item The initial set $X_I(t_e+1)$ has no intersection with $Inv_{q'}$.
\end{itemize}
We denote such minimum estimation error corresponding to $G$ by $z^{\ast}_G$. To find $z^{\ast}_G$, it suffices to find the minimum $z$ such that for all $\tilde \x(t_e+1)$ 
the upper bound of $X_I(t_e+1)$ is smaller than the lower bound of $Inv_{q'}$ along the $i_G^{th}$ state variable,
\begin{equation} \label{eq-robust1}
\begin{aligned}
\HH_{i_G}\tilde{\mathbf{x}}(t_e+1)+\theta+2v < c_G.
\end{aligned}
\end{equation}
Note that at time $t_e$, the continuous state of the system along the $i_G^{th}$ state variable is greater than or equal to $c_G$, and smaller than the maximum value of the one time step forward reachable set from $\mathcal{P}(q,q',\psi)\cap Inv_q$ along the $i_G^{th}$ state variable, i.e., $c_G\leq \HH_{i_G}\mathbf{x}(t_e)<\epsilon$, where $\epsilon =\max(\mathbf{H}_{i_G}R_1(\mathcal{P}(q,q',\psi)\cap Inv_q))$. After the occurrence of event $\psi$, the state equation of the anomalous discrete state is changed to $(\mathbf{A}_{q'}, \mathbf{B}_{q'})$ and the estimated discrete state is changed to $q'$ at time $t_e+1$. Then the set of all possible continuous states at time $t_e+1$ can be represented by: 
\begin{equation}\label{eq-robust2}
\begin{aligned}
\forall \mathbf{x}(t_e)\in Inv_q, \,&c_G\leq \HH_{i_G}\mathbf{x}(t_e)<\epsilon, \\
&\mathbf{x}(t_e+1) \in R_1(\mathbf{x}(t_e))\subseteq \mathbf{A}_{q'}\mathbf{x}(t_e)+\Box \sigma_{q'},
\end{aligned}
\end{equation}
where $\sigma_{q'} = \|\mathbf{B}_{q'}\|\mu+w$.

Now, we can pose the problem of finding $z^{\ast}_G$ as a robust optimization problem.
\begin{equation} \label{eq-robustInit}
\begin{aligned}
z^*_G=
& \underset{z}{\min}
& & z \\
& \text{s. t.}
& & z\geq 0, \; z\geq  \HH_{i_G}  \mathbf{A}_{q'}\mathbf{x}+\sigma_{q'}+\theta+2v-c_G\\
& & & \forall \mathbf{x}\in Inv_{q'}, c_G\leq \HH_{i_G}\x \leq \epsilon, 
\end{aligned}
\end{equation}

By utilizing methods from robust optimization literature, e.g., \cite{bertsimas2016duality}, we can convert \eqref{eq-robustInit} to a linear programming problem as follows:
\begin{equation}\label{eq-dual1}
\begin{aligned}
z_G^*=
& \underset{\mathbf{J},z}{\min}
& & z \\
& \text{s. t.}
& & \left[\begin{matrix}
1\\
1
\end{matrix}\right]z-\left[\begin{matrix}
\mathbf{J}^\intercal \pmb{\rho_1}\\
0
\end{matrix}\right]\geq \left[\begin{matrix}
\sigma_{q'}+\theta+2v - \, c_G\\
0
\end{matrix}\right]\\
& & & \pmb{\Lambda}^\intercal \mathbf{J}\geq \, (\HH_{i_G}\mathbf{A}_{q'})^\intercal, \, \mathbf{J}\geq \mathbf{0}
\end{aligned}
\end{equation}
where $\mathbf{0}\in \mathbb{R}^{2n\times 1}$ is a zero vector. $\mathbf{x}$ is in a polytopic uncertain set, i.e., $\pmb{\Lambda}\mathbf{x}\leq \pmb{\rho_1}$ for problem (\ref{eq-robustInit}), where $\pmb{\Lambda} \in \mathbb{R}^{2n\times n}$, $\pmb{\rho_1}\in \mathbb{R}^{2n\times 1}$ and $\mathbf{J}\in \mathbb{R}^{2n\times 1}$ is a variable of the optimization problem.

For the second possibility, i.e., $\tilde{\mathbf{x}}(t_e)\in \mathcal{R}_{in}^{o}\cap Inv_q$, we are seeking the lower bound of the estimation error along the $i_G^{th}$ state variable such that it satisfies the following: 
\begin{itemize}
\item The reachable set for $\delta_q$ time steps from any point within the initial set $X_I(t_e)$ of the estimated continuous state has no intersection with $Inv_q$. 
\end{itemize}
Considering the worst case that the continuous state is the furthest to the upper bound of $\partial Inv_q$ along the $i_G^{th}$ state variable, i.e., $\HH_{i_G}\mathbf{x}(t_e)=c_G$, our objective can be equivalently changed to find the minimum distance between $c_G$ and $\HH_{i_G}\tilde{\mathbf{x}}(t_e)$. 
We denote this minimum distance by $d^{\ast}_G$. Define $d=|\HH_{i_G}\tilde{\x}(t_e)-c_G|$ as the distance between $\mathcal{P}(q,q',\psi)$ and the estimated state along the $i_G^{th}$ state variable. With this definition, the initial set at time $t_e$ can be represented as $X_I(t_e) = \{ \mathbf{x}: \HH_{i_G}\mathbf{x} \in [c_G+d-\theta-2v, c_G+d+\theta+2v]\}$. Starting from this initial set $X_I(t_e)$, the projection of the reachable set for $\delta_q$ time steps forward onto the $i_G^{th}$ state variable becomes $\HH_{i_G}\mathbf{A}_q^{\delta_q}\x \pm \sigma_q, \; \forall \x\in X_I(t_e)$, where $\sigma_q=\frac{1-\|\mathbf{A}_q\|^{\delta_q}}{1-\|\mathbf{A}_q\|}(\|\mathbf{B}_q\|\mu+w)$. If $\HH_{i_G}\mathbf{A}_q^{\delta_q}\x- \sigma_q > c_\mathcal{L}, \; \forall \x\in X_I(t_e)$, then it is guaranteed that the $\delta_q$ time-step forward reachable set starting from this initial set $X_I(t_e)$ has no intersection with the invariant $Inv_q$. We can pose the problem of finding $d^{\ast}_G$ as the following robust optimization problem.
\begin{equation}\label{eq-robustOpt} 
\begin{aligned}
d_G^*=
& \underset{d}{\min}
& & d \\
& \text{s. t.}
& & d\geq 0, \; \HH_{i_G} \mathbf{A}_q^{\delta_q}\mathbf{x} -\sigma_q\geq c_{\mathcal{L}}\\
& & & \forall \mathbf{x}\in Inv_q, \mathbf{x}\in X_I(t_e).
\end{aligned}
\end{equation}
With a change of variables and by employing the robust optimization techniques \cite{bertsimas2016duality}, we can write an equivalent problem to \eqref{eq-robustOpt} as a linear program.
\begin{equation}\label{eq-dual2}
\begin{aligned}
d_G^*=
& \underset{\mathbf{D},\mathbf{J}}{\min}
& & \HH_{i_G}\mathbf{D} \\
& \text{s. t.}
& & \left[\begin{matrix}
\HH_{i_G}\mathbf{A}_q^{\delta_q}\\
\HH_{i_G}
\end{matrix}\right]\mathbf{D}-\left[\begin{matrix}
\mathbf{J}^\intercal \pmb{\rho_2}\\
0
\end{matrix}\right]\geq \left[\begin{matrix}
\sigma_q+ c_{\mathcal{L}}\\
0
\end{matrix}\right]\\
& & & \pmb{\Lambda}^\intercal \mathbf{J}\geq -\, (\HH_{i_G}\mathbf{A}_q^{\delta_q})^\intercal, \, \mathbf{J}\geq \mathbf{0}, \, \mathbf{D}\geq \mathbf{0} 
\end{aligned}
\end{equation}
where $\mathbf{0}$ is a zero vector with proper dimension, and $\mathbf{D}\in \mathbb{R}^{n}$ is a vector with the $i_G^{th}$ entry $d$ and other entries ``$0$''. $\mathbf{x}$ is in a polytopic uncertain set, i.e., $\pmb{\Lambda}\mathbf{x}\leq \pmb{\rho_2}$, where $\pmb{\rho_2}\in \mathbb{R}^{2n\times 1}$ and $\mathbf{J}\in \mathbb{R}^{2n\times 1}$ is the dual variable. 

Now that we have introduced $z^{\ast}_G$ and $d^{\ast}_G$, we can present the main result of the paper.
\begin{theorem} \label{lemma-type2}
	Given a nominal hybrid automaton $\mathcal{H}_n$. Assume a Type-$C_u$ anomaly $f$ occurs at time $t_f$. If an event $\psi \in \Psi_o$ occurs at $t_e>t_f$, which is supposed to transit the system from discrete state $q$ to $q'$, and the guard $G(q,q',\psi)$ is a condition on the real continuous state which is affected by the anomaly $f$, i.e., $G(q,q',\psi): s_G\mathbf{x}^{(i_G)}\geq c_G$ and $|\mathbf{x_e}^{(i_G)}|\geq \theta$, then the conflict-driven method is guaranteed to detect the anomaly, if the anomaly satisfies:
	\begin{equation} \label{eq-anomalysig}
	\begin{aligned}
	\|\pmb{\Gamma}\pmb{\gamma}(t)\| >& \max(z^*_{q}+\theta+2v, d^*_q+\theta+2v),
	\end{aligned}
	\end{equation}
	where $z^{*}_{q} = \max_{q'} z^{*}_G$ and $d^{*}_{q} = \max_{q'} d^{*}_G$ can be derived by solving the robust optimization problems (\ref{eq-robustInit}) and (\ref{eq-robustOpt}), respectively for all possible $q'$. 
\end{theorem}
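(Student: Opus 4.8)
The plan is to reduce the theorem to the two robust bounds $z^*_G$ and $d^*_G$ that are already certified by the optimization problems \eqref{eq-robustInit} and \eqref{eq-robustOpt}, and to supply the one missing link: a conversion from the injected-signal magnitude $\|\pmb{\Gamma}\pmb{\gamma}(t)\|$ into a guaranteed lower bound on the estimation error $|\mathbf{x_e}^{(i_G)}(t)|$. Once that link is in place, the detection guarantee follows from a two-case analysis on the location of the estimate $\tilde{\mathbf{x}}(t_e)$ at the instant the observed event $\psi$ fires, each case invoking one of the two pre-computed conflict thresholds.

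First I would establish the inequality relating the injected signal to the estimation error. Projecting the anomalous residual relation \eqref{eq-residualanomalous} onto the $i_G^{th}$ coordinate gives $\mathbf{x_e}^{(i_G)} = \mathbf{r}^{(i_G)} - \mathbf{v}^{(i_G)} - (\pmb{\Gamma}\pmb{\gamma})^{(i_G)}$. Since a Type-$C_u$ anomaly keeps the residual below threshold, $|\mathbf{r}^{(i_G)}|\le \theta+v$, and the noise obeys $|\mathbf{v}^{(i_G)}|\le v$; as the guard and the anomaly both act on the $i_G^{th}$ coordinate so that $\|\pmb{\Gamma}\pmb{\gamma}\|=|(\pmb{\Gamma}\pmb{\gamma})^{(i_G)}|$, the reverse triangle inequality yields $|\mathbf{x_e}^{(i_G)}|\ge \|\pmb{\Gamma}\pmb{\gamma}\|-\theta-2v$. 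Hence the hypothesis $\|\pmb{\Gamma}\pmb{\gamma}\|>\max(z^*_q,d^*_q)+\theta+2v$ immediately gives $|\mathbf{x_e}^{(i_G)}|>\max(z^*_q,d^*_q)\ge\max(z^*_G,d^*_G)$, using $z^*_q=\max_{q'}z^*_G$ and $d^*_q=\max_{q'}d^*_G$.

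Next I would close the argument by cases, using that the observed event sets $\tilde q=q'$ at $t_e+1$ and that, by the geometry fixed before the statement, $\tilde{\mathbf{x}}(t_e)$ lies either in $\mathcal{R}_{no,q}^{o}$ or in $\mathcal{R}_{in}^{o}\cap Inv_q$. In the first case $z^*_G$ is, by construction of \eqref{eq-robustInit}, the least estimation error for which \eqref{eq-robust1} holds for every admissible real state, i.e.\ for which $X_I(t_e+1)\cap Inv_{q'}=\emptyset$; since $|\mathbf{x_e}^{(i_G)}|>z^*_G$, Conflict $B$ is raised. In the second case $d^*_G$ is, by construction of \eqref{eq-robustOpt}, the least error for which the $\delta_q$-step reachable set from $X_I(t_e)$ clears $Inv_q$, i.e.\ $R_{\delta_q}(X_I(t_e))\cap Inv_q=\emptyset$; since $|\mathbf{x_e}^{(i_G)}|>d^*_G$, Conflict $C$ is raised. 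Because the hypothesis dominates both thresholds simultaneously, whichever case occurs the corresponding conflict fires, and the maxima over $q'$ absorb any ambiguity in the successor state consistent with $\psi$; this proves detection.

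I expect the main obstacle to be justifying that the two robust programs really certify the \emph{minimum} triggering error rather than merely a sufficient one, and that the worst-case real state used in each program is genuinely worst-case. Concretely, I must argue that the constraint set $\{c_G\le \HH_{i_G}\mathbf{x}\le\epsilon\}$ in \eqref{eq-robustInit} together with the one-step over-approximation through $\A_{q'}$ captures every continuous state the system can occupy one sample after the guard is met, and likewise that pinning $\HH_{i_G}\mathbf{x}(t_e)=c_G$ inside $X_I(t_e)$ in \eqref{eq-robustOpt} is the configuration least favorable to separating the reachable set from $Inv_q$. I would also confirm that the localization $\mathbf{x}(t)\in X_I(t)$ and the half-width bound $\theta+2v$ on the $i_G^{th}$ generator remain valid under the bounded residual, and that the $+1$ offset in \eqref{eq-robust1} versus the $\delta_q$ horizon in \eqref{eq-robustOpt} are handled consistently so that no reachable real state can re-enter the invariant with an error smaller than the computed bound. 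The final algebraic combination is routine; the tightness and exhaustiveness of these robust certificates is where the real care is needed.
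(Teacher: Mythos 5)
Your proposal is correct and follows essentially the same route as the paper's proof: a two-case split on whether $\tilde{\mathbf{x}}(t_e)$ lies in $\mathcal{R}_{no,q}^{o}$ or in $\mathcal{R}_{in}^{o}\cap Inv_q$, invoking $z^*_G$ for Conflict $B$ and $d^*_G$ for Conflict $C$, taking the maximum over $q'$, and converting the signal magnitude to an estimation-error bound via \eqref{eq-residualanomalous}. Your explicit derivation of $|\mathbf{x_e}^{(i_G)}|\geq \|\pmb{\Gamma}\pmb{\gamma}\|-\theta-2v$ is a welcome filling-in of a step the paper only gestures at.
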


\begin{proof}
The solution $z^{*}_G$ is the lower bound of the estimation error which ensures $X_I(t_e+1)\cap Inv_{q'}=\emptyset$, i.e. Conflict $B$. The values of $z^{*}_{G}$ varies from one guard to another. Therefore, by considering $z^{*}_{q}$, we guarantee that at the discrete state $q$, regardless of guard, Conflict $B$ occurs, if $\|\pmb{\Gamma}\pmb{\gamma}(t)\| > z^*_{q}+\theta+2v$. On the other hand, the solution $d^{*}_G$ is the lower bound of the estimation error, which ensures $R_{\delta_q}(X_I(t_e))\cap Inv_{q}=\emptyset$, i.e., Conflict $C$. The values of $d^{\ast}_G$ varies for different guards, hence, we similarly take the maximum of these values for all possible $q'$, which is $d^{\ast}_q$. Based on the relation between the estimation error and anomalous signal in \eqref{eq-residualanomalous}, 
it is guaranteed that if $\|\pmb{\Gamma}\pmb{\gamma}(t)\| > d^*_{q}+\theta+2v$, regardless of guard, Conflict $C$ occurs. By combining the two conditions obtained on the magnitude of anomalous signal for the two possibilities, we can conclude that the proposed conflict-driven method provides detection guarantees on the detection of anomalous signals that satisfy condition \eqref{eq-anomalysig}, regardless of where the estimated state is located in the $Inv_q$ at the time of event. This concludes the proof. 	
\end{proof}

\section{Simulation Result} \label{sec-result}
In this section, we revisit the TG system. We present the nominal hybrid model of the TG system and compare the conflict-driven method with a residual-based method under a Type-$C_u$ anomaly. 

The graphic representation of the nominal hybrid model $\mathcal{H}_n$ of the TG system is shown in Fig.\ref{fig-tgcall}. The train automaton has one discrete state ``run''. The gate automaton has two discrete states: ``up'' and ``down'' (The time of raising and lowering the gate is ignored for simplicity). Although the automata product results in two discrete states, we additionally partition discrete state ``run, up'' to two discrete states to ensure hyperrectangle invariants as defined in \eqref{eq-invariant}. 
The discrete transitions between discrete states are determined by discrete input events $c_{up}$ and $c_{down}$, where $c_{up}$ means ``raise the gate" and $c_{down}$ means ``lower the gate". When sensor 1 detects the train and emits discrete output event $s_1$, the gate controller sends out $c_{down}$. When sensor 2 detects the train and emits discrete output event $s_2$, the gate controller sends out $c_{up}$. For each transition, we associate a guard. The invariants of the discrete states and the guards are given in Fig.\ref{fig-tgcall}. The continuous state of the TG system is $\mathbf{x} = [x_p\quad x_v]^\intercal$, where $x_p, x_v$ are the train position and the train speed, respectively. The continuous output of the TG system is $\mathbf{y}(t)=\mathbf{x}(t)+\mathbf{v}(t)$. If the train is within $16m$ of the gate, the reference speed is $0.2m/s$. Otherwise, it is $1m/s$. The desired operation is that the train speed is no faster than $0.4 m/s$ when the train is within $12 m$ of the gate. The TG system is current state observable. Based on Assumption \ref{assump-anomaytime}, we will only focus on the observer's steady state \footnote{More parameters: track length: $80m$; gate, sensor 1, sensor 2 locations: $60 m$, $45 m$, $75 m$; sampling period: $0.1s$; Upper bounds of noise: $v = 0.1$, $w = 0.01$ (Units depend on the state variable with larger noise); estimation error upper bound in the observer's steady state in nominal discrete states: $\theta=0.05$ (The unit depends on the state variable with larger estimation error at sample times).}\footnote{In reality, the train track intersects with multiple roads at different locations. The discrete state observer gives a unique estimated discrete state after passing the first road. We only focus on the track segment when the observer is in steady state.}. 
\begin{figure}[!tbp]
\vspace{-0.5em}
\centering
 \includegraphics[width=3.2in]{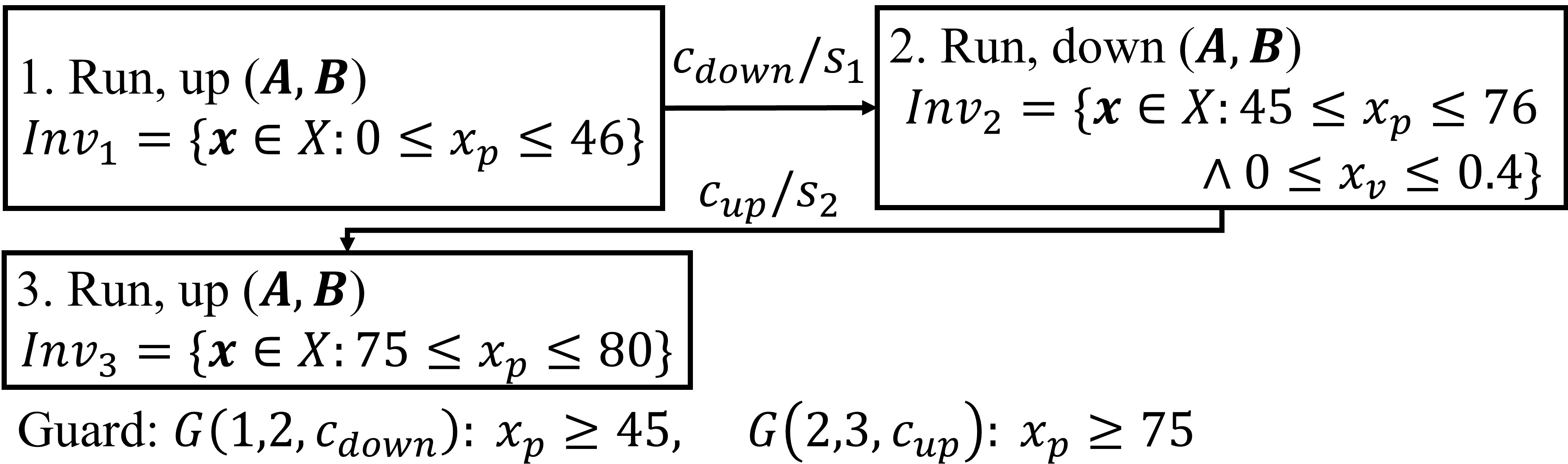}
\vspace{-0.4em}
  \caption{Hybrid automaton $\mathcal{H}_n$ of the TG system.}
  \vspace{-1em}
  \label{fig-tgcall}
\end{figure}

The intersections of the invariants give the intermediate region $\mathcal{R}_{in}$ as $\mathcal{R}_{in}= \{\forall \mathbf{x}\in X: (45\leq  x_p\leq 46 \lor 75 \leq x_p\leq 76)\land 0\leq x_v\leq 0.4 \}$. 
Then we can determine the normal operating regions of the three discrete states, as shown in Fig.\ref{fig-TGCregions},
\begin{equation}\label{eq-TGno}
\begin{aligned}
\mathcal{R}_{no,1}=  \{\forall \mathbf{x}\in X: & 0\leq x_p< 45\lor (45\leq x_p\leq 46 \land x_v>0.4)\},\\
\mathcal{R}_{no,2}=  \{\forall \mathbf{x}\in X: & 46< x_p< 75\land 0\leq x_v\leq 0.4\},\\
\mathcal{R}_{no,3}=  \{\forall \mathbf{x}\in X: & 76< x_p\leq 80\lor (75\leq x_p\leq 76 \land x_v>0.4) \}.
\end{aligned}
\end{equation}
\begin{figure}[h]
      \vspace{-0.5em}
      \centering
      \includegraphics[width=2.8in]{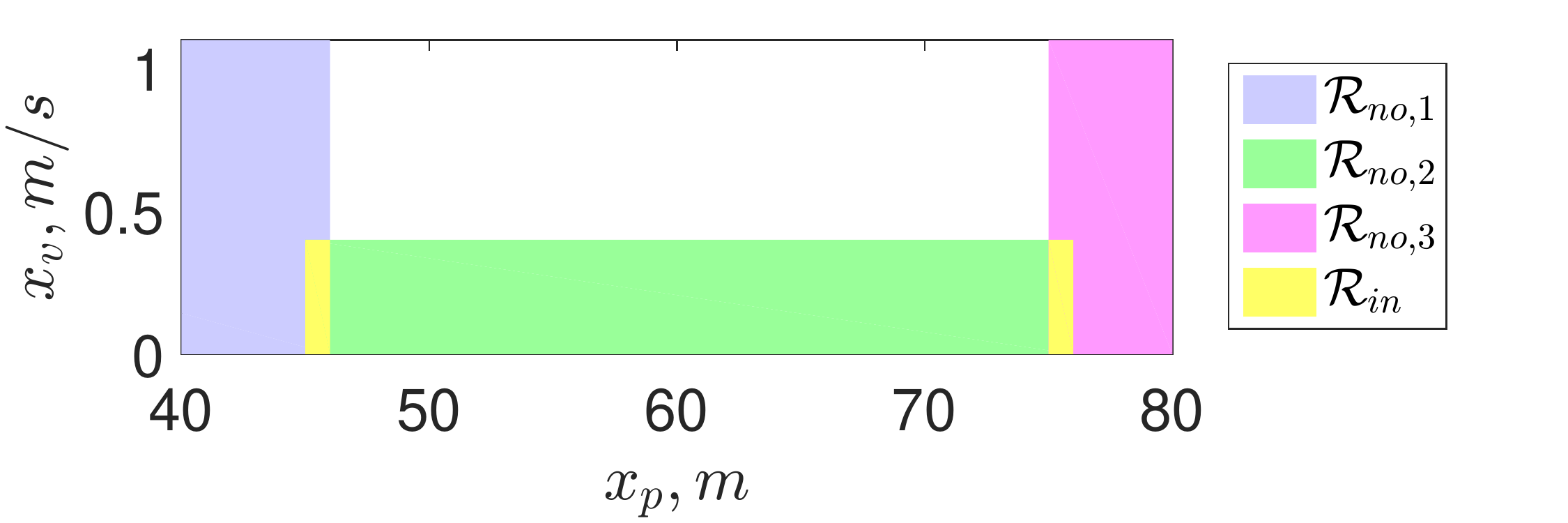}
      \vspace{-0.5em}
      \caption{The normal operating regions and the intermediate region of the TG system.} 
      \label{fig-TGCregions}
      \vspace{-1em}
\end{figure}

The neighbor hyperplane of each guard is then:
\begin{equation}
\begin{aligned}
\mathcal{L}(1,2,c_{down}): \;  x_p = 46,\quad \mathcal{L}(2,1,c_{up}): \; x_p = 76,
\end{aligned}
\end{equation}
With the invariants, guards and neighbor hyperplanes, we can determine the time step for reachability analysis of each discrete state, which is $\delta_1 = \delta_2=9, \delta_3=0$.

In these three discrete states, state matrices $(\mathbf{A}, \mathbf{B})$ are the same. The eigenvalues of $\mathbf{A}$ are 1 and 0.95. The eigenvector $\pmb{\xi}$ corresponding to the marginally stable eigenvalue is $[1 \quad 0]^\intercal$. The non-zero element of $\pmb{\xi}$ corresponds to the measured train position. 
A Type-$C_u$ anomaly scenario is a ramp anomalous signal with slope $0.02m/s$ added to the measured train position. The anomaly starts at $0s$ and runs until the end of the simulation, which makes the system violate its desired operation at $180.8 s$ with position $71.55 m$ and speed $0.41 m/s$.

The comparison of the detection performance of the conflict-driven method and a residual-based method under the anomaly mentioned above is shown in Fig.\ref{fig-comp_Cu}. The threshold of the residual-based method is $\theta+v = 0.15$ (The unit depends on $\theta$). The residual-based method fails to detect the anomaly because the residual does not increase. The conflict-driven method detects this anomaly at time $48.2 s$ when Conflict $C$ occurs. The estimated discrete state is 1, but the reachable set $R_{\delta_{1}}(X_I(482)) \cap Inv_1=\emptyset$. At $48.2s$, the norm of the anomalous signal is $0.96 m$, which is lower than the lower bound $0.98 m$ calculated by solving robust optimization problems. That means the conflict-driven method may detect the anomalies with norm lower than the lower bound which we can provide detection guarantees. 

\begin{figure}[h]
\centering
 \includegraphics[width=2.8in]{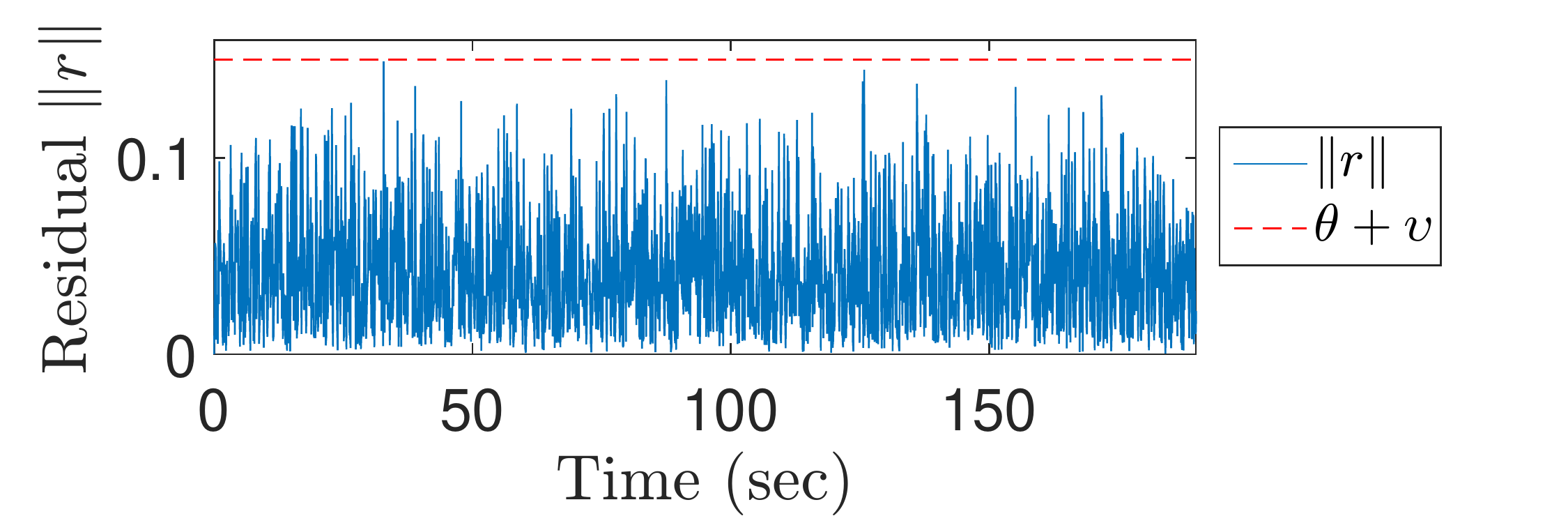}
 \includegraphics[width=3in]{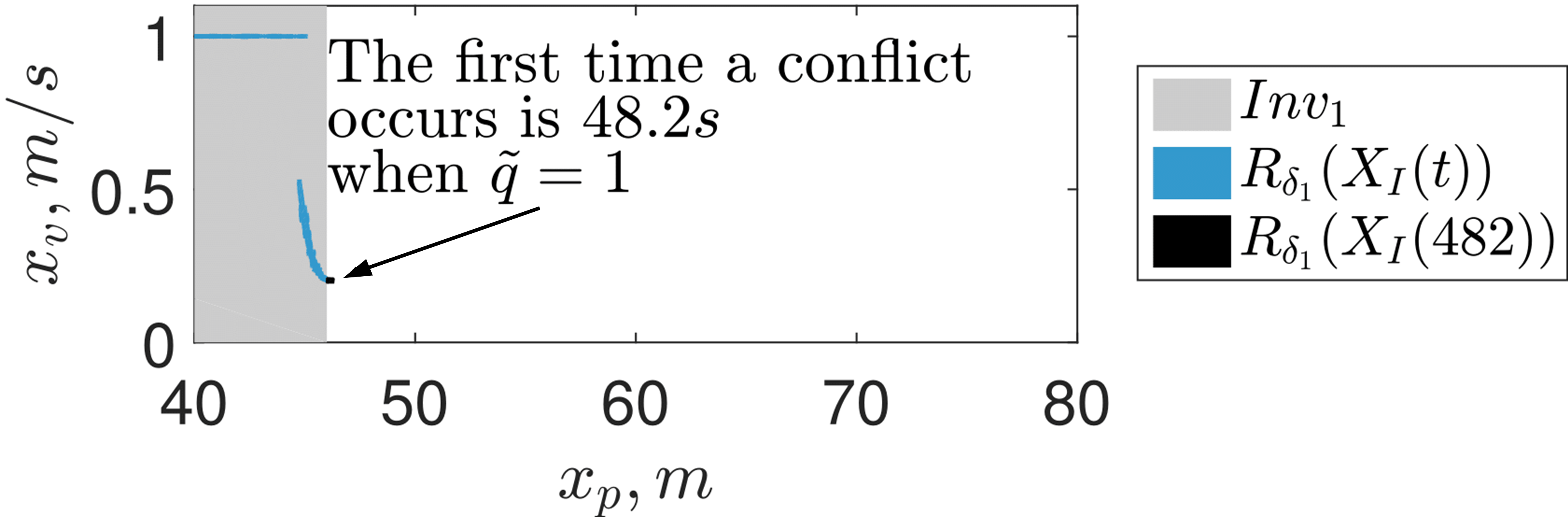}
  \caption{Simulation result under the Type-$C_u$ anomaly: (Top) Residual; (Bottom) The occurrence of Conflict $C$.}
  \label{fig-comp_Cu}
\end{figure}

\section{Conclusion and Future Work} \label{sec-conclusion}
In this paper, we propose a conflict-driven method, which uses the discrete and continuous variables and the hybrid model of the system, to provide detection guarantees for anomalies that are undetectable with traditional residual-based methods in addition to anomalies that can be detected with these methods.  
We define three different conflict types. 
If any one of the conflicts occurs, the anomaly is detected. Both mathematical demonstration and simulation result illustrate the effectiveness of the conflict-driven method. 

More work needs to be done about the conflict-driven method. 
One future work is to improve the hybrid observer design 
such that we can apply the conflict driven method to more general hybrid systems with reset maps. One potential solution is to use the Convergence Ratio method in \cite{wang2017improved}, which calculates the estimation error of the continuous state with two continuous state observers. Other future work includes the analysis of the conflict-driven method in detecting anomalies that affect both the continuous and discrete variables of the system.

\section*{Acknowledgment}
This work was supported by the National Institute
of Standards and Technology under Award No.70NANB16H205. We thank Isaac Spiegel, a Ph.D. student at University of Michigan for his discussion on hybrid system definition.

\bibliography{IEEEabrv.bib,ref.bib}{}

\begin{thebibliography}{10}

\bibitem{cardenas2009challenges}
Alvaro Cardenas, Saurabh Amin, Bruno Sinopoli, Annarita Giani, Adrian Perrig,
  and Shankar Sastry.
\newblock Challenges for securing cyber physical systems.
\newblock In {\em CPSSW}, page~5, 2009.

\bibitem{lopez2017categorization}
Felipe Lopez, Miguel Saez, Yuru Shao, Efe Balta, James Moyne, Molly Mao, Kira
  Barton, and Dawn Tilbury.
\newblock Categorization of anomalies in smart manufacturing systems to support
  the selection of detection mechanisms.
\newblock In {\em IEEE CASE}, 2017.

\bibitem{wan2010composition}
Kaiyu Wan, Danny Hughes, Ka~Lok Man, and Tomas Krilavi{\v{c}}ius.
\newblock Composition challenges and approaches for cyber physical systems.
\newblock In {\em IEEE NESEA}, pages 1--7, 2010.

\bibitem{ding2008model}
Steven Ding.
\newblock {\em Model-based fault diagnosis techniques: design schemes,
  algorithms, and tools}.
\newblock Springer Science \& Business Media, 2008.

\bibitem{sayed2014discrete}
Moamar Sayed-Mouchaweh.
\newblock {\em Discrete Event Systems: Diagnosis and Diagnosability}.
\newblock Springer Science \& Business Media, 2014.

\bibitem{sampath1996failure}
Meera Sampath, Raja Sengupta, Stephane Lafortune, Kasim Sinnamohideen, and
  Demosthenis~C Teneketzis.
\newblock Failure diagnosis using discrete-event models.
\newblock {\em IEEE Trans. on control systems technology}, 4(2):105--124, 1996.

\bibitem{goodrich2001continuous}
Charles~H Goodrich and James Kurien.
\newblock Continuous measurements and quantitative constraints: Challenge
  problems for discrete modeling techniques.
\newblock 2001.

\bibitem{zhao2005monitoring}
Feng Zhao, Xenofon Koutsoukos, Horst Haussecker, James Reich, and Patrick
  Cheung.
\newblock Monitoring and fault diagnosis of hybrid systems.
\newblock {\em IEEE Trans. on Systems, Man, and Cybernetics-B},
  35(6):1225--1240, 2005.

\bibitem{harirchi2017active}
Farshad Harirchi, Sze~Zheng Yong, Emil Jacobsen, and Necmiye Ozay.
\newblock Active model discrimination with applications to fraud detection in
  smart buildings.
\newblock In {\em IFAC WC, Toulouse, France}, 2017.

\bibitem{hofbaur2004hybrid}
Michael~W Hofbaur and Brian~C Williams.
\newblock Hybrid estimation of complex systems.
\newblock {\em IEEE Trans. on Systems, Man, and Cybernetics-B},
  34(5):2178--2191, 2004.

\bibitem{campbell2015auxiliary}
Stephen~L Campbell and Ramine Nikoukhah.
\newblock {\em Auxiliary signal design for failure detection}.
\newblock Princeton University Press, 2015.

\bibitem{nikoukhah2006auxiliary}
Ramine Nikoukhah and Stephen~L Campbell.
\newblock Auxiliary signal design for active failure detection in uncertain
  linear systems with a priori information.
\newblock {\em Automatica}, 42(2):219--228, 2006.

\bibitem{harirchi2016model}
Farshad Harirchi, Zheng Luo, and Necmiye Ozay.
\newblock Model (in) validation and fault detection for systems with polynomial
  state-space models.
\newblock In {\em ACC}, pages 1017--1023, 2016.

\bibitem{balluchi2002design}
Andrea Balluchi, Luca Benvenuti, Maria~D Di~Benedetto, and Alberto~L
  Sangiovanni-Vincentelli.
\newblock Design of observers for hybrid systems.
\newblock In {\em HSCC}, pages 76--89. Springer, 2002.

\bibitem{clark1978instrument}
Robert~N Clark.
\newblock Instrument fault detection.
\newblock {\em IEEE Trans. on Aerospace and Electronic Systems}, (3):456--465,
  1978.

\bibitem{wang2017improved}
Zheng Wang, DM~Anand, J~Moyne, and DM~Tilbury.
\newblock Improved sensor fault detection, isolation, and mitigation using
  multiple observers approach.
\newblock {\em Systems Science \& Control Engineering}, 5(1):70--96, 2017.

\bibitem{emami1988effect}
Abbas Emami-Naeini, Muhammad~M Akhter, and Stephen~M Rock.
\newblock Effect of model uncertainty on failure detection: the threshold
  selector.
\newblock {\em IEEE Trans. on Automatic Control}, 33(12):1106--1115, 1988.

\bibitem{girard2005reachability}
Antoine Girard.
\newblock Reachability of uncertain linear systems using zonotopes.
\newblock In {\em HSCC}, volume~5, pages 291--305. Springer, 2005.

\bibitem{mo2010false}
Yilin Mo and Bruno Sinopoli.
\newblock False data injection attacks in control systems.
\newblock In {\em 1st Workshop on SCS}, 2010.

\bibitem{bertsimas2016duality}
Dimitris Bertsimas and Frans~JCT de~Ruiter.
\newblock Duality in two-stage adaptive linear optimization: Faster computation
  and stronger bounds.
\newblock {\em INFORMS Journal on Computing}, 28(3):500--511, 2016.

\end{thebibliography}
\bibliographystyle{IEEEtran}

\end{document}